\pgfplotsset{compat=1.15}
\newcommand{\setword}[2]{%
	\phantomsection
	#1\def\@currentlabel{\unexpanded{#1}}\label{#2}%
}
\definecolor{ccqqww}{rgb}{0.8,0.,0.4}
\definecolor{ffccww}{rgb}{1.,0.8,0.4}
\definecolor{qqwwzz}{rgb}{0.,0.4,0.6}
\definecolor{ttzzqq}{rgb}{0.2,0.7,0.4}
\theoremstyle{plain}
\newtheorem{theorem}{Theorem}[section]
\newtheorem{lemma}[theorem]{Lemma}
\newtheorem{proposition}[theorem]{Proposition}
\newtheorem{proposition*}{Proposition}
\newtheorem{corollary}[theorem]{Corollary}
\theoremstyle{definition}
\newtheorem{example}{Example}
\theoremstyle{remark}
\newtheorem{remark}{Remark}
\numberwithin{equation}{section}
\DeclareMathOperator{\R}{\mathbb{R}}
\DeclareMathOperator{\C}{\mathbb{C}}
\renewcommand{\H}{\mathbb{H}}
\renewcommand{\S}{\mathbb{S}}
\renewcommand{\a}{\mathfrak{a}}
\renewcommand{\k}{\mathfrak{k}}
\DeclareMathOperator{\I}{\mathcal{I}}
\DeclareMathOperator{\Z}{\mathbb{Z}}
\DeclareMathOperator{\casimir}{\mathcal{C}}
\DeclareMathOperator{\g}{\mathfrak{g}}
\DeclareMathOperator{\m}{\mathfrak{m}}
\DeclareMathOperator{\divergence}{\text{\!div}}
\DeclareMathOperator{\End}{\text{End}}
\DeclareMathOperator{\Rep}{\text{Rep}}
\DeclareMathOperator{\id}{\text{id}}
\DeclareMathOperator{\Ad}{\text{Ad}}
\DeclareMathOperator{\rank}{\text{rank}}
\DeclareMathOperator{\Isot}{\text{Isot}}
\DeclareMathOperator{\res}{\text{res}}
\DeclareMathOperator{\sym}{\text{sym}}
\DeclareMathOperator{\U}{\mathcal{U}}
\DeclareMathOperator{\llangle}{\langle\langle}
\DeclareMathOperator{\rrangle}{\rangle\rangle}
\title[A note about the generic irreducibility of the spectrum of the Laplacian on Homogeneous Spaces]{A note about the generic irreducibility of the spectrum of the Laplacian on Homogeneous Spaces}
\author[D. S. de Oliveira \and
M. A. M. Marrocos
]
{Diego S. de Oliveira \and
Marcus A. M. Marrocos
}
\address{Centro de Matem\'atica, Computa\c{c}\~ao e Cogni\c{c}\~ao, Universidade Federal do ABC\\
Av. dos Estados, 5001\\
09210-580 Santo Andr\'e, S\~ao Paulo\\
Brazil.}
\email{diego.sousa@ufabc.edu.br}
\address{Departamento de Matemática, ICE, Universidade Federal do Amazonas\\
Av. General Rodrigo Octávio Jordão Ramos, 6200, Campus Universitário, Coroado I,\\
Manaus, AM, 69080-005.}
\email{marcusmarrocos@ufam.edu.br}
\thanks{The first author was supported in part by grant, Coordenação de Aperfeiçoamento de Pessoal de Nível Superior (CAPES). The  second author acknowledge support from Fundação de Amparo à Pesquisa do Estado do Amazonas (FAPEAM)}
\keywords{Laplacian, representation theory, generic irreducibilty, lie groups, homogeneous spaces}
\subjclass[2020]{35J05, 22C05, 22E46, 53C35}
\begin{document}

\maketitle

%%%%%%%%%%%%%%%%%%%%%%
%%%%%%%%%%%%%%%%%%%%%%
%%%%%%%%%%%%%%%%%%%%%%
%%%%%%%%%%%%%%%%%%%%%%

%%% FANCY STYLE: CABEÇALHO E RODAPÈ

%%% \leftmark = colocar capítulo atual
%%% \rightmark = colocar seção atual
%%% \thepage = colocar página atual
%%%  R=Right, L=Left, C=Center
%%% H = head, F=Foot
%%%  E = Even (páginas pares), O = Ímpar
\pagestyle{fancy} 
%% CABEÇALHO
\fancyhf[LHE]{\scalebox{0.6}{DIEGO S. DE OLIVEIRA AND MARCUS A. M. MARROCOS}}
\fancyhf[RHE]{\thepage}
\fancyhf[LHO]{\thepage}
\fancyhf[RHO]{\scalebox{0.5}{A NOTE ABOUT THE GENERIC IRREDUCIBILITY OF THE
SPECTRUM OF THE LAPLACIAN ON HOMOGENEOUS
SPACES}}
\fancyhf[CHE]{}
\fancyhf[CHO]{}

%% RODAPÉ
\fancyhf[LFE]{ }
\fancyhf[RFE]{ }
\fancyhf[LFO]{ }
\fancyhf[RFO]{ }
\fancyhf[CFE]{ }
\fancyhf[CFO]{ }

%%%%%%%%%%%%%%%%%%%%%%
%%%%%%%%%%%%%%%%%%%%%%
%%%%%%%%%%%%%%%%%%%%%%
%%%%%%%%%%%%%%%%%%%%%%

%%%%%\nocite{*} %%esse comando faz citar TODAS AS REFERENCIAS DO ARQUIVO .BIB

\begin{abstract}
Petrecca and Röser  (2018, \cite{Petrecca2019}), and Schueth (2017, \cite{Schueth2017}) had shown that for a generic $G$-invariant metric $g$ on certain compact homogeneous spaces $M=G/K$ (including symmetric spaces of rank 1 and some Lie groups), the spectrum of the Laplace-Beltrami operator $\Delta_g$ was real $G$-simple. The same is not true for the complex version of $\Delta_g$ when there is a presence of representations of complex or quaternionic type. We show that these types of representations induces a $Q_8$-action that commutes with the Laplacian in such way that $G$-properties of the real version of the operator have to be understood as $(Q_8 \times G)$-properties on its corresponding complex version. Also we argue that for symmetric spaces on rank $\geq 2$ there are algebraic symmetries on the corresponding root systems which relates distinct irreducible representations on the same eigenspace.  
\end{abstract}

%%%%%%% O comando abaixo coloca nos contentes só Partes, capítulos e seções no CONTENTS, omitindo assim as subseções.
\setcounter{tocdepth}{1}
\tableofcontents

\section{Introduction}

\,

\,

\textbf{Motivation}

\,

The study of the Laplacian appears in many applications as diffusion phenomenons or state levels of energy.
In differential geometry, this study is intimately related to the calculus of order 2 on manifolds and to the manifestation of the curvature notion. Laplacians and their spectra can allow us to produce some explicit differential constructions, which are helpful in order to improve our knowledge and qualitative information on the given manifold.

The Laplace-Beltrami operator (or simply Laplacian) acts on the space of smooth functions $C^\infty(M)$, on a compact connected Riemannian manifold $(M,g)$, by
\[
\Delta_g := - \divergence (\,\text{grad} \;\cdot \,)
\]
and its eigenvalues are distributed in the line $\R$ by a sequence
\[
\lambda_0 = 0 < \lambda_1 \leq \lambda_2 \leq \cdots \to \infty \;.
\]
This operator also commutes with isometries by pullback. Conversely, every diffeomorphism that commutes with $\Delta_g$ by pullback is, in fact, an isometry with respect to $g$.  Thus, the Laplacian measures, in some sense, the presence of geometric symmetries on the manifold. Here, we understand the notion of ``symmetry" by any map that impacts on the multiplicities of the Laplacian's eigenvalues.  Uhlenbeck (1976, \cite{Uhlenbeck1976}) showed that, for a generic metric $g$ on the manifold $M$, the Laplace-Beltrami operator has simple spectrum, that is, for a compact connected manifold most of the metrics have not nontrivial geometric symmetries, even it could have be some isometries which do not play any role on the multiplicities.

We restrict our analysis to the class of the connected compact homogeneous spaces of the form $M=G/K$, which allow us to apply some algebraic facts, converting a great part of the constructions on abstract manifolds to the machinery of linear algebra and representation theory, which can be more tangible. More specifically, in some contexts, the complex version of the Laplacian $\Delta_g$ can be identified as algebraic operator in the form
\[  
 \bigoplus_{V \in \widehat{G}_K} \Delta_g^{V^K} \otimes \id : \bigoplus_{V \in \widehat{G}_K} V^K \otimes V^* \to \bigoplus_{V \in \widehat{G}_K} V^K \otimes V^* \; ,
\]
where $\widehat{G}_K$ is a complete set of nonequivalent spherical irreducible representations with respect to the pair $(G,K)$, and $\Delta_g^{V^K}: V^K \to V^K$ is obtained as a restriction of the an linear operator on the $G$-module $(\rho,V)$ given by
\[
 \Delta_g^V = -\sum\limits_{j=1}^N (\,\rho_*(Y_j)\,)^2 : V \to V \; ,
\]
for a $g$-orthonormal basis $\{Y_j\}_{j=1}^N$ of $\g$ (see more details in Subsection \ref{preliminaries:laplace-operators}). Therefore, in this setting, the spectrum of $\Delta_g$ remounts to the study of the characteristic polynomials of the operators $\Delta_g^{V^K}$.  

Suppose that we are only considering $G$-invariant metrics on $M=G/K$. Here $G$ can be understood as a prescribed group of geometric symmetries. In this context, the eigenspaces of the Laplacian are $G$-modules. Now, we can set an analogous problem to that studied by Uhlenbeck. In this new problem, we want to decide if the eigenspaces of the Laplacian $\Delta_g$ are $G$-irreducible representations for a generic $G$-invariant metric $g$, meaning that their corresponding multiplicities are affect only by the prescribed symmetries. That lead us to the notion of $G$-simple spectrum. The spectrum of an operator is called real (resp. complex) $G$-simple if its eigenspaces are real (resp. complex) irreducible representations of $G$.

Some advances were obtained about the spectral problem with prescribed symmetries by a given group $G$. Schueth (2017, \cite{Schueth2017}) proved that, for some compact Lie groups $G$, a generic left invariant metric satisfies $\Delta_g$ with real $G$-simple  spectrum. Similarly, Petrecca and Röser (2018, \cite{Petrecca2019}) proved that, for compact symmetric spaces $M=G/K$ of rank $1$, $\Delta_g$ has a real $G$-simple spectrum for a generic $G$-invariant metric, and they also proved that the same result is not true for rank $\geq 2$.

\,

\textbf{Real $G$-properties vs.\ complex $(Q_8 \times G)$-properties of the spectrum.}

\,

The Laplacians can be studied over $\R$ or $\C$ (in the last case, by an elementary complexification). The classical problems in analysis and differential geometry usually consider the operators over $\R$. But, when the problem is closely related to the representation theory, it is convenient to present them over $\C$, in order to make a better use of the algebraic machinery of this theory. The Laplace-Beltrami operator $\Delta_g : L^2(M,\R) \to L^2(M,\R)$ has a complex version $\Delta_g : L^2(M,\C) \to L^2(M,\C)$.  

It is convenient to establish how to migrate from the constructions over $\R$ to the constructions over $\C$. When the only representations that appears in the complex spectral problem are of real type, it can be show that the property of real $G$-simple spectrum for $\Delta_g$ corresponds to a complex $G$-simple spectrum. In the presence of any  representation of complex or quaternionic type, it turned out that the finite group of quaternions $Q_8$ plays an important role in the spectral setting and, in this case, a real $G$-simple spectrum corresponds to a complex $(Q_8\times G)$-simple spectrum. The main idea for the last case is that, when we migrate from the real version of the $\Delta_g$ to its complex version, we automatically earn some structural algebraic $Q_8$-maps that commutes with $\Delta_g$. Therefore, in the presence of  representations of complex or quaternionic type, $G$-properties for the real operator $\Delta_g$ must be translated as $(Q_8\times G)$-properties for its corresponding complex version. We make this analysis on Section \ref{sec:Q8-section}. 

\,

\,

\textbf{Structural symmetries for symmetric spaces of rank $\geq 2$}

\,

Although Petrecca and Röser (2018, \cite{Petrecca2019}) had established that symmetric spaces with rank $\geq 2$ do not satisfy the irreducibility criterium for the spectrum of the Laplace-Beltrami operator, there is a reason behind it. 

The $G$-invariance of the metric is itself a strong condition referring to prescribed geometric symmetries, which can lead to other structural symmetries (of algebraic nature, for example). In the case of a compact symmetric pair $(G,K)$ (thus, a spherical pair), some geometric conditions can produce algebraic properties on the corresponding root system. In rank $\geq 2$, we gain structural algebraic symmetries on the root system which are responsible to relate irreducible representations with the same Casimir eigenvalue. These different irreducible representations with the same Casimir eigenvalue contributes to the same eigenspace, making impossible the $G$-simplicity of the spectrum. However, we still can establish a generic panorama.

After we take in account all the geometric and structural symmetries on the corresponding root system, we will be able to show a generic situation for the eigenspaces of the Laplacian described by them. Section \ref{sec:rank2-section} is dedicated to this goal.

\,

\,

%%%%%%%%%%%%%%%%%%%%%%%%%%%%%
%%%%%%%%%%%%%%%%%%%%%%%%%%%%$
%%%%%%%%%%%%%%%%%%%%%%%%%%%%%
%%%%%%%%%%%%%%%%%%%%%%%%%%%%$
%%%%%%%%%%%%%%%%%%%%%%%%%%%%%
%%%%%%%%%%%%%%%%%%%%%%%%%%%%$
%%%%%%%%%%%%%%%%%%%%%%%%%%%%%
%%%%%%%%%%%%%%%%%%%%%%%%%%%%$
%%%%%%%%%%%%%%%%%%%%%%%%%%%%%
%%%%%%%%%%%%%%%%%%%%%%%%%%%%$

\section{Preliminaries}

\,

Unless we make other mentions, we consider $M = G/K$ as a compact connected $m$-dimensional homogeneous space, where $G$ is a $N$-dimensional connected, compact and simple Lie group acting by left translations on $M$ and $K$ is a 
 $k$-dimensional closed subgroup of $G$. We fix $g_0$ a bi-invariant metric on $G$ that induces on the Lie algebra level the $g_0$-orthogonal splitting $\g = \k \oplus \m$, where $\m$ is the isotropy $K$-representation. We take $g$ as an arbitrary $(G\times K)$-invariant metric on $G$ (i.e. invariant by $G$-left translations and $K$-right translations), which corresponds to a $G$-invariant metric on $M$ with the same notation $g$. $\widehat{G}$ denotes a complete set of nonequivalent complex irreducible representations of the Lie group $G$. $\mathcal{U}(\g^{\C})$ is the universal enveloping algebra of $\g$.

We begin with some notation and basic definitions. First, we define the induced $G$-module of the trivial $K$-module $\C$ by
\[
C^\infty(G,K;\C) := \{ f \in C^{\infty}(G,\C) \; | \; \forall x \in G, \, \forall k\in K, f(xk)=f(x)   \} \, ,
\]
which is endowed with the left action
\[
L(x)f := f \circ \ell_{x^{-1}} \, , \; \forall x \in G, \; \forall f \in C^\infty(G,K;\C),
\]
where $\ell_{x^{-1}}:G\ni y \mapsto x^{-1}y\in G$ is the left translation by $x^{-1}$. Let
\[
\langle f_1 , f_2 \rangle_{L^2} := \int_G = f_1(x)  \overline{f_2(x)} dx \, , \; \forall f_1,f_2 \in C^\infty(G,K;\C)
\]
where $\int_G (\cdot) dx$ is the Haar integral of $G$ with unitary volume. So, we have the $L^2$-completion of $C^\infty(G,K;\C)$ denoted by $L^2(G,K;\C)$. This new space has a Peter-Weyl decomposition given by the Hilbert sum
\begin{equation} \label{eq:peter-weyl-general-decomposition}
    \bigoplus_{V \in \widehat{G}_{K}} \Isot(V^*) \simeq \bigoplus_{V \in \widehat{G}_{K}} V^K \otimes V^* \, ,
\end{equation}
where $\widehat{G}_{K}:= \{ (\rho,V)\in \widehat{G} \; | \, \dim V^K >0 \}$ and $\Isot(V^*)$ denotes the $V^*$-isotypical component of $L^2(G,K;\C)$ with respect to the $L$-action, endowed with an isomorphism
\begin{equation} \label{eq:isomorphisms-isotypical-general-decomposition}
    \begin{array}{c}
     \varphi_{V^*}: V^K \otimes V^* \ni v \otimes \xi \mapsto \phi_{v,\xi} \in   \Isot(V^*) \; , \\
    \text{where $\phi_{v,\xi} := (\rho^*(\cdot)^{-1}\xi)(v) = \xi(\rho(\cdot)v) $ ,}
\end{array}
\end{equation}
defined by linear extension. 

Our main goal is to study operators $\Delta_g : L^2(G,K;\C) \to L^2(G,K;\C)$ which commutes with the $(G\times K)$-isometries with respect to the metric $g$, by describing them inside each isotypal component in the decomposition \ref{eq:peter-weyl-general-decomposition} and in terms of the isomorphisms \ref{eq:isomorphisms-isotypical-general-decomposition}.

Define $\Isot(V,\overline{V}) := \Isot(V)+\Isot(\overline{V}) \subset L^2(G,K;\C) $. Since $V \simeq \overline{V}$ for $V$ of real or quaternionic type, and $V \ncong \overline{V}$ for $V$ of complex type (as $G$-modules) then 

\begin{equation} \label{eq:isotipica}
\Isot(V,\overline{V}) = 
\begin{cases}
\Isot(V) + \Isot(\overline{V}) \text{, for $V$ of complex type,}\\
\Isot(V) \text{, for $V$ of real or quaternionic type.}
\end{cases}
\end{equation}
\,

%%%%%%%%%%%%%%%%%%%%%%%%%%%%%%%%%%%
%%%%%%%%%%%%%%%%%%%%%%%%%%%%%%%%%%%
%%%%%%%%%%%%%%%%%%%%%%%%%%%%%%%%%%%
%%%%%%%%%%%%%%%%%%%%%%%%%%%%%%%%%%%
%%%%%%%%%%%%%%%%%%%%%%%%%%%%%%%%%%%
%%%%%%%%%%%%%%%%%%%%%%%%%%%%%%%%%%%

\subsection{Types of each representation}
\label{sec:types-representation}
We recall that for each complex finite dimensional $G$-module $V$, we have $V^* \simeq \overline{V}$, where $\overline{V}$ is the conjugate representation of $V$, that is, $\overline{V}$ coincides with $V$ as sets but any complex scalar $z$ acts as $\overline{z}$ in the scalar multiplication of $\overline{V}$. $V$ is called of \emph{real type} if it admits a linear $G$-map $J_V:V \to \overline{V}$ such that $J_V^2 = \id$. $V$ is called of \emph{quaternionic type} if it admits a linear $G$-map $J_V:V \to \overline{V}$ such that $J_V^2 = -\id$. In both cases, $J_V$ is called a \emph{structural map}.  $V$ is called of \emph{complex type} if $V \ncong \overline{V}$ as $G$-modules, i.e. it is not of real type neither of quaternionic type. 
For $V$ of complex type, we have that $V \oplus \overline{V} \simeq \H \otimes V$ is of quaternionic type (for more details, see \cite[Sec.II-6]{Brocker1985}). 

The representations $V$ of quaternionic type are identified as representations in the category $\Rep(G,\H)$ (i.e. with scalars on $\H$) and, under this identification, the structural map $J_V$ is simply the left multiplication by the quaternionic element $j$. 

Finally, we mention that for each $V \in \widehat{G}$ there exists a irreducible real $G$-representation $V_{\R}$ such that 
\begin{equation*} 
    \C \otimes V_{\R} \simeq \left\{ \begin{array}{l}
     \text{$V$, if $V$ is of real type} \\
     \text{$\H\otimes V$, if $V$ is of complex or quaternionic type}
\end{array} \right. 
\end{equation*}
(see \cite[Sec.1]{Petrecca2019} or \cite[Sec.2]{Schueth2017}).

\,

%%%%%%%%%%%%%%%%%%%%%%%%%%%%%%%%%%%
%%%%%%%%%%%%%%%%%%%%%%%%%%%%%%%%%%%
%%%%%%%%%%%%%%%%%%%%%%%%%%%%%%%%%%%
%%%%%%%%%%%%%%%%%%%%%%%%%%%%%%%%%%%
%%%%%%%%%%%%%%%%%%%%%%%%%%%%%%%%%%%
%%%%%%%%%%%%%%%%%%%%%%%%%%%%%%%%%%%

\subsection{Laplace operators} \label{preliminaries:laplace-operators}

We assume that $\{Y_j\}_{j=1}^N$ is a $g$-orthonormal basis of $\g$ such that $\{Y_j\}_{j=k+1}^N$ is a orthonormal basis of $\m$ and we define the left-invariant operator of order 2
\[
\casimir(Y_1,\dots,Y_N):= -\sum\limits_{j=1}^N Y_j^2 : C^{\infty}(G,\C) \to C^{\infty}(G,\C)  \; .
\]
For an arbitrary $G$-representation $(\rho,V)$ (finite dimensional or not), with induced representation $\rho_*:\U(\g^{\C}) \to \End(V)$,  we consider
\[
\Delta_g^V := \rho_*(\casimir(Y_1,\dots,Y_N)) = -\sum\limits_{j=1}^N \rho_*(Y_j^2) : V \to V \; .
\]
In particular, we have the operator
\[
\Delta_g:= L_*(\casimir(Y_1,\dots,Y_N)) = -\sum\limits_{j=1}^N L_*(Y_j^2) : C^\infty(G,K;\C) \to C^\infty(G,K;\C) \; .
\]
(and we have its extension $\Delta_g: L^2(G,K;\C) \to L^2(G,K;\C)$).

\begin{remark}
If we follow the construction on \cite[Sec.1]{Petrecca2019}, then we note that $\Delta_g$ coincides with the Laplace-Beltrami operator associated to $(M,g)$, up to an identification. 
\end{remark}

\begin{remark} \label{remark:eigenspaces-j-invariant}
By \cite[Sec.1]{Petrecca2019}, if $V$ is a $G$-module of quaternionic type with structural map $J_V: V \to \overline{V}$, then the eigenspaces of $\Delta_g^V$ are $J_V$-invariant.
\end{remark}

\begin{remark} \label{remark:laplacian-laplacianV-tensor-id}
 Also by \cite[Sec.1]{Petrecca2019}, if we apply the isomorphism $\Isot(V^*) \simeq V^K \otimes V^*$, then the Laplacian can be identified as
\begin{equation}  \label{eq:laplacianV-tensor-id-tensor-id}
\Delta_g \simeq \bigoplus_{V \in \widehat{G}_K} \Delta_g^{V^K} \otimes \id : \bigoplus_{V \in \widehat{G}_K} V^K \otimes V^* \to \bigoplus_{V \in \widehat{G}_K} V^K \otimes V^* \; ,
\end{equation}
where $\Delta_g^{V^K}:= \Delta_g^V|_{V^K}: V^K \to V^K$. It also holds that 
\begin{equation*}
    \Delta_g^{V^K} = -\sum\limits_{j=k+1}^N \rho_*(Y_j^2) : V^K \to V^K \; .
\end{equation*}    
\end{remark}

\,

%%%%%%%%%%%%%%%%%%%%%%%%%%%%%%%%%%%
%%%%%%%%%%%%%%%%%%%%%%%%%%%%%%%%%%%
%%%%%%%%%%%%%%%%%%%%%%%%%%%%%%%%%%%
%%%%%%%%%%%%%%%%%%%%%%%%%%%%%%%%%%%
%%%%%%%%%%%%%%%%%%%%%%%%%%%%%%%%%%%
%%%%%%%%%%%%%%%%%%%%%%%%%%%%%%%%%%%

\subsection{Basic facts on root systems of symmetric pairs} \label{preliminaries:root-systems}

Let $(\a, R_{\g,\k})$ the corresponding root system for a symmetric pair $(G,K)$, endowed with inner product $(\cdot,\cdot)$ (which is unique up to a scalar). The half sum of positive roots is $\delta$ and the Weyl group is $W$. We also fix a Weyl Chamber $C$ and define the lattices
\[
\Gamma_G := \{ H \in \a \, ; \; \exp_G(2\pi H) = e \}
\]
and
\[
\I := \sum\limits_{\alpha \in R_{\g,\k}} \Z \cdot \alpha^\vee \; ,
\]
where $\alpha^\vee := \frac{2}{\langle \alpha, \alpha \rangle } \alpha$. We recall that in the simply connected case, we have $\Gamma_G^* = \I^*$.

So, the spherical representations (i.e. the $G$-irreducible modules $V$ such that $\dim V^K >0$) are in one-to-one correspondence with the set $C \cap \Gamma_G^*$. Therefore, each spherical representation is in the form $V^\mu$, indexed by its highest restriced weight $\mu \in C\cap \Gamma_G^*$ and it has Casimir eigenvalue
\begin{equation*}
    \begin{array}{l}
    \lambda_\mu := a_\mu^2 - (\delta,\delta) \; ,  \\
     \text{where $a_\mu := ( \mu + \delta,\mu+\delta)^{1/2} $ .}
\end{array}
\end{equation*}
For more details, see \cite{Hall2015, Helgason2001}.

\begin{remark} \label{remark:same-casimir-eigenvalue}
    Note that two spherical representations $V^\mu$ and $V^\eta$, with respect to the pair $(G,K)$, have the same Casimir eigenvalue if, and only if, $a_\mu = a_\eta$. The last claim is the same to say that $\mu$ and $\eta$ must be in the same sphere centered in $-\delta$.
\end{remark}

\,

%%%%%%%%%%%%%%%%%%%%%%%%%%%%%%%%%%%
%%%%%%%%%%%%%%%%%%%%%%%%%%%%%%%%%%%
%%%%%%%%%%%%%%%%%%%%%%%%%%%%%%%%%%%
%%%%%%%%%%%%%%%%%%%%%%%%%%%%%%%%%%%
%%%%%%%%%%%%%%%%%%%%%%%%%%%%%%%%%%%
%%%%%%%%%%%%%%%%%%%%%%%%%%%%%%%%%%%

\subsection{Basic facts on the quaternion group $Q_8$} \label{preliminaries:Q8} 

It is well known that the finite quaternion group
\[
Q_8 := \{ \pm 1, \pm i, \pm j, \pm ij \}
\]
has only one $2$-dimensional complex irreducible representation which is isomorphic to faithful representation $\H$ (the ring of quaternions endowed with the standard $Q_8$-action by left multiplication). The other complex irreducible representations are of degree $1$.

\,

%%%%%%%%%%%%%%%%%%%%%%%%%%%%%%%%%%%
%%%%%%%%%%%%%%%%%%%%%%%%%%%%%%%%%%%
%%%%%%%%%%%%%%%%%%%%%%%%%%%%%%%%%%%
%%%%%%%%%%%%%%%%%%%%%%%%%%%%%%%%%%%
%%%%%%%%%%%%%%%%%%%%%%%%%%%%%%%%%%%
%%%%%%%%%%%%%%%%%%%%%%%%%%%%%%%%%%%

\subsection{The space of $G$-invariant metrics} \label{preliminaries:G-invariant-metrics}
We know that the $G$-invariant metrics on $M = G/K$ are in one-to-one correspondence with $(G \times K)$-invariant metrics on $G$. There are other characterizations that we will see now. First, let
\begin{equation*}
    \begin{array}{l}
        \sym_K(\m):=  \{ \kappa \in \End(\m) ; \; \text{$\kappa$ is symmetric and $\Ad_K$-equivariant}  \} \, , \\
        \sym_K^+(\m):= \{ \kappa \in \sym_K(\m)  ; \; \text{each eigenvalue of $\kappa$ is positive}  \} \, .
   \end{array}
\end{equation*}
Note that $\sym_K(\m)$ has an induced Euclidean topology from $\End(\m)$. The same is true for $\sym_K^+(\m)$.

For each $\kappa \in \sym_K^+(\m)$, we define 
\[
g_\kappa := g_0(\kappa^{-1}(\cdot) \, , \, \cdot):\m \times \m \to \R
\]
which corresponds to a $G$-invariant metric on $M$. Therefore, the set of $(G\times K)$-invariant metrics on $G$ can be identified as the space $\sym_K^+(\m)$, which turned out to be an open subset of $\sym_K(\m)$ (see more on \cite[Sec.1]{Petrecca2019}).

For our purposes, a residual subset in a topological space is a subset that can be expressed as a countable intersection of open dense subsets in this space. In this context,  the $G$-invariant metrics on $M$ that corresponds to a residual subset of $\sym_K^+(\m)$ are called \emph{generic}. 

So, in this identification, the operators on Subsection \ref{preliminaries:laplace-operators} are actually indexed by a parameter $\kappa \in \sym_K^+(\m)$. We can generalize these constructions to operators with a parameter $\kappa \in \sym_K(\m)$ (a more general space). We do this in the following subsection. 

\,

%%%%%%%%%%%%%%%%%%%%%%%%%%%%%%%%%%%
%%%%%%%%%%%%%%%%%%%%%%%%%%%%%%%%%%%
%%%%%%%%%%%%%%%%%%%%%%%%%%%%%%%%%%%
%%%%%%%%%%%%%%%%%%%%%%%%%%%%%%%%%%%
%%%%%%%%%%%%%%%%%%%%%%%%%%%%%%%%%%%
%%%%%%%%%%%%%%%%%%%%%%%%%%%%%%%%%%%

\subsection{Operators on each representation}
\label{prelimiaries:operators-on-each-representation}
Fix a $g_0$-orthonormal basis $\{Y_j\}_{j=1}^N$ of $\g$ such that $\{Y_j\}_{j=k+1}^N$ is an orthonormal basis of $\m$ and let $(\rho,V) \in \widehat{G}$.

Now, we can define for each $\kappa = (\kappa_{ij}) \in \sym_K(\m)$ the operators
\begin{equation*}
    D^{V}(\kappa) := - \sum_{i,j = 1}^N \kappa_{ij} \, \rho_*(Y_i \cdot Y_j) : V \to V  \; .
\end{equation*}
Note that, when $k \in \sym_K^+(\m)$ corresponds to a metric $g$, then $D^V(\kappa) = \Delta_g^V$ and in this case we can write $ \Delta_\kappa^V := \Delta_g^V = D^V(\kappa)$. Therefore, the family of operators $\{ D^V(\kappa) \}_{\kappa \in \sym_K(\m)}$ is bigger than the family $\{ \Delta_\kappa^V \}_{\kappa \in \sym_K^+(\m)}$.

For each $\kappa \in \sym_K(\m)$, we can also define
\begin{equation*}
    D^{V^K}(\kappa) = -\sum\limits_{i, j=k+1}^N \kappa_{ij} \, \rho_*(Y_i \cdot Y_j) : V^K \to V^K \; .
\end{equation*}
Similarly, when $g$ corresponds to a $\kappa \in \sym_K^+(\m)$, we have $\Delta_{\kappa}^{V^K} = D^{V^K}(\kappa)$.

We note that $D^{V^K}(\kappa) = D^V(\kappa)|_{V^K}:V^K \to V^K$. For details on the precedent constructions, we refer to \cite[Sec.1]{Petrecca2019}.

\,

%%%%%%%%%%%%%%%%%%%%%%%%%%%%%%%%%%%
%%%%%%%%%%%%%%%%%%%%%%%%%%%%%%%%%%%
%%%%%%%%%%%%%%%%%%%%%%%%%%%%%%%%%%%
%%%%%%%%%%%%%%%%%%%%%%%%%%%%%%%%%%%
%%%%%%%%%%%%%%%%%%%%%%%%%%%%%%%%%%%
%%%%%%%%%%%%%%%%%%%%%%%%%%%%%%%%%%%

\subsection{Recent advances in the literature} \label{preliminaries:recent-advances-results-literature}
Now, we mention some important advances presented by \cite{Petrecca2019} and \cite{Schueth2017}.  Let $V, V_1,V_2 \in \widehat{G}$ and $\kappa \in \sym_K(\m)$.

There exists a map $\res:\C[t]\times \C[t] \to \C$, called \emph{resultant}, that satisfies for each pair of polynomials $p,q \in \C[t]$: (i) $\res$ is a polynomial map such that $\res(p,q)$ is a polynomial on the coefficients of $p$ and $q$; (ii) $\res(p,q)$ is the zero polynomial if and only if $p$ and $q$ share any common zero. This map is useful because it allow us to compare the eigenvalues of the operators $D^{V^K}(\kappa)$ (or $D^V(\kappa)$) by taking their corresponding characteristic polynomials. Define
\begin{equation*}
     \begin{array}{ccl}
        p_{V}(\kappa) &:=& \text{charac.polynomial}(D^{V^K}(\kappa)) \, ,\\
        a_{V_1,V_2} (\kappa) &:=& \res\left(p_{V_1}(\kappa), p_{V_2}(\kappa)\right) \, , \\
        b_{V}(\kappa) &:=& \res\left(p_{V}(\kappa), \frac{d}{dt} p_{V}(\kappa)\right) \, , \\
        c_{V}(\kappa) &:=& \res\left(p_{V}(\kappa), \frac{d^2}{dt^2} p_{V}(\kappa)\right)  .
    \end{array}
\end{equation*}
Thus we have polynomial maps
\[
a_{V_1,V_2},\,b_V,\,c_V: \sym_K(\m) \to \C
\]
and the following theorem:
\begin{theorem}
\small \label{teo:criterio-metrica-G-simples-generica} 
There exists a $G$-invariant metric $g$ such that the Laplace-Beltrami operator $\Delta_g$ on $M$ has a real $G$-simple spectrum if and only if the items bellow are simultaneously satisfied:

      \begin{enumerate}
          \item For all $V_1,V_2 \in \widehat{G}_K$, with $V_1 \ncong V_2$ and $V_1 \ncong V_2^*$, $a_{V_1,V_2}$ is not the zero polynomial.

          \item For all $V\in \widehat{G}_K$ of real or complex type, $b_{V}$ is not the zero polynomial.

          \item For all $V\in \widehat{G}_K$ of quaternionic type, $c_{V}$ is not the zero polynomial.
      \end{enumerate}
Moreover, the existence of a such metric is equivalent to say that the Laplace-Beltrami operator of a generic $G$-invariant metric on $M$ on $M$ has real $G$-simple spectrum.
\end{theorem}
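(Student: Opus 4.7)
My plan is to translate ``real $G$-simple spectrum'' into an algebraic condition on the finite-dimensional operators $\Delta_\kappa^{V^K}$, express those conditions as non-vanishing of the resultants $a_{V_1,V_2}$, $b_V$, $c_V$, and then invoke a Baire-category argument in $\sym_K^+(\m)$. By \eqref{eq:laplacianV-tensor-id-tensor-id}, every eigenspace of $\Delta_g$ inside $V^K\otimes V^*$ has the form $E_\lambda\otimes V^*$, where $E_\lambda\subseteq V^K$ is an eigenspace of $\Delta_\kappa^{V^K}$; and since $\Delta_g$ commutes with complex conjugation, the real eigenspaces complexify to conjugation-invariant sums of such blocks.

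I would then split the analysis by the type of $V$ as in Subsection~\ref{sec:types-representation}. For $V$ of real type ($V_\R\otimes\C\simeq V$), a real-irreducible summand forces $\dim_\C E_\lambda=1$, i.e.\ simple spectrum of $\Delta_\kappa^{V^K}$. For $V$ of complex type ($V_\R\otimes\C\simeq V\oplus V^*$), the $V_\R$-isotypical part complexifies to $\Isot(V^*)\oplus\Isot(V)$, the two sides being matched by conjugation of a common eigenvalue, so again $\dim_\C E_\lambda=1$. For $V$ of quaternionic type ($V_\R\otimes\C\simeq\H\otimes V$), Remark~\ref{remark:eigenspaces-j-invariant} forces $E_\lambda$ to be $J_V$-invariant and hence of even dimension, and real $G$-simplicity now requires $\dim_\C E_\lambda=2$ for every eigenvalue. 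Finally, eigenvalues coming from inequivalent real-irreducibles must differ; parametrising them by the orbits of $V\mapsto V^*$ on $\widehat{G}_K$, this becomes the condition that $p_{V_1}(\kappa)$ and $p_{V_2}(\kappa)$ share no common root whenever $V_1\ncong V_2$ and $V_1\ncong V_2^*$.

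Next I would rewrite these conditions via resultants. ``No common root'' is by definition $a_{V_1,V_2}(\kappa)\ne0$. ``Simple spectrum'' is the classical discriminant condition $b_V(\kappa)\ne0$, covering both real and complex types. For the quaternionic type I would verify that $c_V(\kappa)\ne 0$ is equivalent to every eigenvalue of $\Delta_\kappa^{V^K}$ having multiplicity exactly $2$: by $J_V$-invariance the multiplicities in $p_V(\kappa)$ are all even, so one may write $p_V(\kappa)(t)=q(t)^2$ for some $q\in\C[t]$; then $p_V''(\kappa)=2(q')^2+2qq''$, whence at any root $\lambda$ of $q$ one has $p_V''(\kappa)(\lambda)=2q'(\lambda)^2$, which is nonzero precisely when $\lambda$ is a simple root of $q$, that is, a double root of $p_V(\kappa)$.

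For the equivalence and the ``moreover'' clause I would invoke Baire category. Each $a_{V_1,V_2}$, $b_V$, $c_V$ is a polynomial map on the finite-dimensional real vector space $\sym_K(\m)$; when not identically zero its zero locus is a proper real-algebraic subvariety, hence closed and nowhere dense in the open set $\sym_K^+(\m)$. Since $\widehat{G}_K$ is countable, the simultaneous non-vanishing locus is a countable intersection of open dense sets and so residual in $\sym_K^+(\m)$, which simultaneously yields the existence of a metric with real $G$-simple spectrum and the genericity statement. Conversely, if some $\kappa\in\sym_K^+(\m)$ already realises real $G$-simplicity, the preceding step shows that $a_{V_1,V_2}(\kappa)$, $b_V(\kappa)$, $c_V(\kappa)$ are all nonzero and hence none of the polynomials can vanish identically. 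The main obstacle I anticipate is the quaternionic case: using Remark~\ref{remark:eigenspaces-j-invariant} to force even multiplicities and then isolating ``multiplicity exactly $2$'' via the second-derivative resultant $c_V$ rather than the discriminant $b_V$; the remainder of the argument is bookkeeping once this translation is in place.
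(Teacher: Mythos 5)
Your proposal is correct, and the paper itself offers no proof of this theorem: it is quoted from \cite{Schueth2017} and \cite{Petrecca2019}, whose argument is essentially the one you give (translate real $G$-simplicity, via the type of each $V\in\widehat{G}_K$, into simple spectrum of $\Delta_\kappa^{V^K}$ for real/complex type, multiplicity exactly $2$ for quaternionic type, and disjointness of spectra for non-isomorphic, non-dual pairs; then encode these by the resultants $a_{V_1,V_2}$, $b_V$, $c_V$ and use that a non-trivial polynomial on $\sym_K(\m)$ has nowhere dense zero set, so the countable intersection over $\widehat{G}_K$ is residual in $\sym_K^+(\m)$). Your verification that $c_V(\kappa)\neq 0$ characterizes ``all multiplicities equal to $2$'' under the even-multiplicity constraint coming from Remark \ref{remark:eigenspaces-j-invariant} is the right way to handle the quaternionic case.
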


\begin{example} \label{ex:Schueth-SU2}
    Let $G = (SU(2) \times \cdots \times SU(2) \times T^n) / \Gamma $, where $\Gamma$ is a discrete central subgroup. Then for a generic left invariant metric $g$ on $G$, the Laplace-Beltrami operator $\Delta_g$ is real $G$-simple. This result was proven by Schueth \cite{Schueth2017} in 2017.
\end{example}

When $M = G/K$ is a compact irreducible symmetric space with metric $g$, then for each irreducible $G$-representation the operator $\Delta_g^V: V \to V$ is the Casimir element on $V$.  Thus, each operator $\Delta_g^{V^K} = \Delta_g^V|_{V^K}:V^K \to V^K$ is a multiple of the identity. Therefore, in this panorama, $\Delta_g^{V^K}$ has simple spectrum if and only if $\dim_{\C} V^K = 1$.

We say that $K$ is a \emph{spherical subgroup} of $G$ if, for each $V\in \widehat{G}_K$, we have that $\dim_{\C} V^K = 1$. Thus, as $K$ is a spherical subgroup, each operator $\Delta_g^{V^K}$ has a single eigenvalue and $\Isot(V^*) \simeq V^K \otimes V^* \simeq  \C \otimes V^* \simeq V^*$. So, the verification of the Theorem \ref{teo:criterio-metrica-G-simples-generica}, in this context, reduces to the verification of the item (1), that is, we only need to verify that distinct isotypical components do not produce common eigenvalues. It can be shown that for every irreducible symmetric pair $(G,K)$, the subgroup $K$ is spherical with respect to $G$. However we warn that not every subgroup $K$ of $G$ is spherical for an arbitrary homogeneous space $M=G/K$ (not even for a normal homogeneous space).

\begin{example} \label{ex:Petrecca-Symmetric-Spaces}
    Let $M$ a compact irreducible symmetric space. Then the Laplace-Beltrami operator is real $G$-simple if and only if $\rank(M) = 1$. Also, if $M$ is a product of compact rank one symmetric spaces, then the Laplace-Beltrami operator is real $G$-simple. In 2018, Petrecca and Röser \cite{Petrecca2019} argued that for $\rank(M) = 1$ there is only one spherical irreducible representation for each Casimir eigenvalue and that representation must be of real type, while for $\rank(M) \geq 2$ we can construct two or more non-equivalent and non-dual spherical representations with the same Casimir eigenvalue.
\end{example}

\,

\,

%%%%%%%%%%%%%%%%%%%%%%%%%%%%%%%%%%%%%%%%%%%%%%%%%%%%%%%%%%%%
%%%%%%%%%%%%%%%%%%%%%%%%%%%%%%%%%%%%%%%%%%%%%%%%%%%%%%%%%%%%
%%%%%%%%%%%%%%%%%%%%%%%%%%%%%%%%%%%%%%%%%%%%%%%%%%%%%%%%%%%%
%%%%%%%%%%%%%%%%%%%%%%%%%%%%%%%%%%%%%%%%%%%%%%%%%%%%%%%%%%%%
%%%%%%%%%%%%%%%%%%%%%%%%%%%%%%%%%%%%%%%%%%%%%%%%%%%%%%%%%%%%
%%%%%%%%%%%%%%%%%%%%%%%%%%%%%%%%%%%%%%%%%%%%%%%%%%%%%%%%%%%%
%%%%%%%%%%%%%%%%%%%%%%%%%%%%%%%%%%%%%%%%%%%%%%%%%%%%%%%%%%%%
%%%%%%%%%%%%%%%%%%%%%%%%%%%%%%%%%%%%%%%%%%%%%%%%%%%%%%%%%%%%

\section{Real $G$-properties vs.\ complex $(Q_8 \times G)$-properties of the spectrum} \label{sec:Q8-section}

\,

Now, consider the following constructions and identifications:

\begin{itemize} \small
    \item[(J1)] For $V \in \widehat{G}_K$ of real type, the real structural map $j:= J_V$ satisfies $j^2 = \id_{V}$ and it induces a map $j\otimes \id: V^K\otimes \overline{V} \to V^K \otimes \overline{V}$.

    \item[(J2)] For $V \in \widehat{G}_K$ of quaternionic type, the quaternionic structural map $j:=J_V$ satisfies $j^2 = -\id_{V}$ and it induces a map $j\otimes \id: V^K\otimes \overline{V} \to V^K \otimes \overline{V}$.

    \item[(J3)] For $V \in \widehat{G}_K$ of complex type, the $G$-module $(V \oplus \overline{V})$ admits a quaternionic structural map $j: (V \oplus \overline{V}) \to (V \oplus \overline{V})$, which induces a map  
    \[
    j\otimes \id: (V^K\otimes \overline{V})\oplus(\overline{V}^K\otimes V) \to (V^K\otimes \overline{V})\oplus(\overline{V}^K\otimes V) \; .
    \]
\end{itemize}

The maps $j\otimes \id$ on (J1), (J2) and (J3) can be extended to a map  
\[
J : \bigoplus_{V \in \widehat{G}_K} V^K \otimes V^* \to \bigoplus_{V \in \widehat{G}_K} V^K \otimes V^* \; ,
\]
which commutes with the operator $\Delta_g$ given by Equation \ref{eq:laplacianV-tensor-id-tensor-id} (this commutativity follows from Remark \ref{remark:eigenspaces-j-invariant}).

Therefore the operator $\Delta_g$ commutes with the $Q_8$-group identified as a group of automorphisms on $L^2(G,K;\C)$ given by
\begin{equation} \label{eq:Q8-group}
    Q_8 := \{ \pm \id, \pm i \id, \pm J, \pm iJ \} \, .
\end{equation}

Also, it is well known that $\H \otimes V \simeq V \oplus \overline{V}$. Thus, each $G$-submodule isomorphic to $\H \otimes V \simeq V \oplus \overline{V} \hookrightarrow L^2(G,K;\C)$, for a $G$-irreducible submodule $V$, must be a $(Q_8 \times G)$-irreducible submodule.

\begin{remark} \label{remark:translating-G-properties-to-Q8xG-properties}
  We recall that irreducible representations of quaternionic or complex type $V$ satisfy that $V\oplus \overline{V}$ is the complexification of an unique real irreducible representation $V_{\R}$. Thus, the analysis of the complex version of $\Delta_g$ on $\Isot(V,\overline{V})$ reduces to the analysis of $\Isot(V_{\R})$ on its corresponding real version. Because of that, we do not see any effect of the $Q_8$-group \ref{eq:Q8-group} in the real version of the operator, even in the presence of representations of complex or quaternionic type (case in that $Q_8$ plays an important role in the spectral decomposition for the complex version of the Laplacian $\Delta_g$).
\end{remark}

Now we state the main theorem of this section.

\begin{theorem} \label{th:Q8xG-simples-vs-G-simples-real} Let $M=G/K$ a compact homogeneous space and $g$ a $G$-invariant metric. Consider $\Delta_g$ the Laplace-Beltrami operator acting on $M$. Then, $\Delta_g$ has a real $G$-simple spectrum if and only if its corresponding complex version, also denoted by $\Delta_g$, has a complex $(Q_8 \times G)$-simple spectrum. In particular, the real version of Laplace-Beltrami operator on $M$ is generically real $G$-simple if and only if its complex version is generically complex $(Q_8 \times G)$-simple.
\end{theorem}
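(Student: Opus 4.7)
The plan is to use complexification as a bijection between real $G$-isotypical pieces and complex $(Q_8\times G)$-isotypical pieces, applying it eigenspace by eigenspace. Recall from Subsection~\ref{sec:types-representation} that each $V\in \widehat{G}_K$ determines a unique real irreducible $V_{\R}$ with $\C\otimes V_{\R}\simeq V$ when $V$ is of real type, and $\C\otimes V_{\R}\simeq V\oplus \overline{V}\simeq \H\otimes V$ when $V$ is of complex or quaternionic type.

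First, I would classify the complex $(Q_8\times G)$-irreducible submodules appearing inside $L^2(G,K;\C)$. Since the only complex irreducibles of $Q_8$ are the four $1$-dimensional characters and the faithful $2$-dimensional module $\H$, combining this with the identifications (J1)--(J3) I would establish two points: \emph{(a)} for $V$ of real type, any $G$-submodule isomorphic to $V$ is already $(Q_8\times G)$-irreducible, since the associated $J$ squares to $\id$ and the $Q_8$-action on it factors through a character; \emph{(b)} for $V$ of complex or quaternionic type, the submodule $V\oplus \overline{V}\simeq \H\otimes V$ is $(Q_8\times G)$-irreducible because $\H$ is the unique $2$-dimensional complex irreducible of $Q_8$ and it exchanges the two $G$-irreducible summands $V$ and $\overline{V}$.

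Second, because $\Delta_g$ is a real operator, complexification $E_{\R}\mapsto \C\otimes_{\R}E_{\R}$ sets up a bijective correspondence between the real eigenspaces of $\Delta_g$ in $L^2(G,K;\R)$ and the complex eigenspaces in $L^2(G,K;\C)$ (at the same eigenvalue). Together with the classification above and Remark~\ref{remark:translating-G-properties-to-Q8xG-properties}, I would deduce that $E_{\R}$ is real $G$-irreducible if and only if $\C\otimes E_{\R}$ is complex $(Q_8\times G)$-irreducible. For the forward implication, $E_{\R}\simeq V_{\R}$ forces $\C\otimes E_{\R}$ to be either $V$ (real type) or $V\oplus \overline{V}$ (complex/quaternionic type), both $(Q_8\times G)$-irreducible by \emph{(a)}--\emph{(b)}. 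For the converse, given a $(Q_8\times G)$-irreducible decomposition of $\C\otimes E_{\R}$ I would take the real fixed points of the appropriate structural map (the standard complex conjugation on real type pieces, and the anti-linear involution coming from $J$ on the complex/quaternionic pieces) to recover a real $G$-irreducible $E_{\R}$. Applying this equivalence on every eigenspace of $\Delta_g$ proves the first assertion.

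The generic statement then follows immediately from Theorem~\ref{teo:criterio-metrica-G-simples-generica}: the existence of one $G$-invariant metric with real $G$-simple Laplacian is equivalent to a generic such metric having that property, and by the equivalence just proved this is the same as the generic metric producing complex $(Q_8\times G)$-simple spectrum. The main technical obstacle is point \emph{(a)}: one must check that the action of $\{\pm \id,\pm i\id,\pm J,\pm iJ\}$ on real type isotypical components, where $J^2=\id$ rather than $-\id$, still assembles consistently into a $Q_8$-action and gives the claimed irreducibility — in effect, on those components the action factors through an abelian quotient of $Q_8$, and one must verify that this factorization does not create new $(Q_8\times G)$-invariant subspaces beyond the $G$-invariant ones.
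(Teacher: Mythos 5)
Your plan follows essentially the same route as the paper's proof: both compare, eigenspace by eigenspace, the real eigenspaces $V_{\R}^{\oplus m}$ with their complex counterparts $V^{\oplus m}$ or $(\H\otimes V)^{\oplus m}$ via the identification $\C\otimes V_{\R}\simeq V$ or $\H\otimes V$, both use that $\H\otimes V$ is $(Q_8\times G)$-irreducible because $\H$ is the unique $2$-dimensional complex irreducible of $Q_8$, and both obtain the generic statement by transferring the metric-by-metric equivalence. The technical point you flag in \emph{(a)} (the role of $J$ with $J^2=\id$ on real-type components) is treated at the same level of detail in the paper, whose proof likewise reduces that case to ordinary simplicity of the spectrum of $\Delta_g^{V^K}$, i.e.\ $m(\lambda)=1$.
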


\begin{proof}
     Let $g$ be a $G$-invariant metric on $M$ and $V \in \widehat{G}_K$. Take $\lambda$ as an eigenvalue of $\Delta_g^{V^K}$ with multiplicity $m(\lambda)$. Denote by $V^K_\lambda$ and $\Isot(V,\overline{V})_\lambda$ the $\lambda$-eigenspaces of $\Delta_g^{V^K}$ and $\Delta_g|_{\Isot(V,\overline{V})}$, respectively. In particular, $m(\lambda) = \dim_{\C}V^K_\lambda$. 
     
     By equations \ref{eq:isotipica} and  \ref{eq:laplacianV-tensor-id-tensor-id}, if $V$ is of real type or of quaternionic type, then we know the restricted operator $\Delta_g|_{\Isot(V,\overline{V})}$ is identified (up to isomorphisms in the domain and in the codomain) as 
     \[\Delta_g^{V^K} \otimes \id_{\overline{V}}: V^K \otimes \overline{V}  \to V^K \otimes \overline{V} \, ,\] whose $\lambda$-eigenspace is given by $V^K_\lambda \otimes \overline{V} \simeq \overline{V}^{\oplus m(\lambda)}$. Since $V \simeq \overline{V}$ for $V$ of real or quaternionic type, then $V^K_\lambda \otimes \overline{V}$ is isomorphic to the space $V^{\oplus m(\lambda)}$. Recall Subsection \ref{sec:types-representation}. Since $\H \otimes V \simeq V\oplus \overline{V}$ and $V \simeq \overline{V}$ for $V$ of real or quaternionic type, then we also have the isomorphism $(\H \otimes V)^{m(\lambda)/2} \simeq V^{\otimes m(\lambda)}$ whenever $m(\lambda)$ is even.

     When $V$ is of complex type, we have a similar construction. In this case, equations \ref{eq:isotipica} and  \ref{eq:laplacianV-tensor-id-tensor-id} state that $\Delta_g|_{\Isot(V,\overline{V})}$ can be identified as the direct sum of the operators $\Delta_g^{V^K} \otimes \id_{\overline{V}}$ and $\Delta_g^{\overline{V}^K} \otimes \id_{V}$. Therefore, the $\lambda$-eigenspace of $\Delta_g|_{\Isot(V,\overline{V})}$ is isomorphic to $(V^K_\lambda \otimes \overline{V}) \oplus (\overline{V^K_\lambda} \otimes V) \simeq (\overline{V}^{\oplus m(\lambda)}) \oplus (V^{\oplus m(\lambda)}) \simeq (V \oplus \overline{V})^{\oplus m(\lambda)}$. Again, by Subsection \ref{sec:types-representation}, we can use the isomorphism $\H \otimes V \simeq V \oplus \overline{V}$ in order to conclude that the $\lambda$-eigenspace of $\Delta_g|_{\Isot(V,\overline{V})}$ is isomorphic to $(\H \otimes V)^{m(\lambda)}$.

     By summarizing the last two paragraphs, the $\lambda$-eigenspace of $\Delta_g|_{\Isot(V, \overline{V})}$ is given by: 

     \begin{equation} \label{I}
    \Isot(V,\overline{V})_{\lambda} \simeq \left\{ \begin{array}{l}
     \text{$ V^{\oplus m(\lambda)} \simeq (\C \otimes V^{\oplus m(\lambda)})$, if $V$ is of real type}, \\
     \text{$(\H\otimes V)^{\oplus m(\lambda)}$, if $V$ is of complex type}, \\ \text{$(\H\otimes V)^{\oplus \,m(\lambda)/2}$, if $V$ is of quaternionic type}
    \end{array} \right.  
    \end{equation}

Subsection \ref{preliminaries:Q8} ensures us that $\H$ is an irreducible $Q_8$-module and since $V$ is an irreducible $G$-module, then the space $\H \otimes V$ must be an irreducible $(Q_8 \times G)$-module.

By Subsection \ref{sec:types-representation}, we can consider $V_{\R}$ a irreducible real representation such that  
\begin{equation*}
    \C \otimes V_{\R} \simeq \left\{ \begin{array}{l}
     \text{$V$, if $V$ is of real type,} \\
     \text{$\H\otimes V$, if $V$ is of complex or quaternionic type.}
\end{array} \right. \; .
\end{equation*}
Thus,
\begin{equation*}
    \Isot(V,\overline{V})_\lambda \simeq \left\{ \begin{array}{l}
     \text{$ (\C \otimes V_{\R})^{\oplus m(\lambda)}$, if $V$ is of real type,} \\
     \text{$(\C \otimes V_{\R})^{\oplus m(\lambda)}$, if $V$ is of complex type,} \\ \text{$(\C \otimes V_{\R})^{\oplus \,m(\lambda)/2}$, if $V$ is of quaternionic type.}
\end{array} \right. 
\end{equation*}
Then the $\lambda$-eigenspace of the restriction of $\Delta_{g}$ to the space $\Isot(V,\overline{V})\cap C^\infty(G,K;\R)$ is given by

\begin{equation} \label{II}
     \Isot(V_{\R})_\lambda \simeq \left\{ \begin{array}{l}
     \text{$V_{\R}^{\oplus m(\lambda)}$, if $V$ is of real type,} \\
     \text{$V_{\R}^{\oplus m(\lambda)}$, if $V$ is of complex type,} \\ \text{$ V_{\R}^{\oplus \,(m(\lambda)/2)}$, if $V$ is of quaternionic type.}
\end{array} \right. 
\end{equation}

By equations \ref{I} and \ref{II}, if $V$ is of real or complex type, then $\Delta_{g}|_{\Isot(V,\overline{V})}$ has complex $(Q_8 \times G)$-simple spectrum if and only if $\Delta_g^V$ has simple spectrum if and only if $\Delta_{g} |_{\Isot(V_{\R})}$ has real $G$-simple spectrum. Similarly, if $V$ is of quaternionic type, then $\Delta_{g}|_{\Isot(V,\overline{V})}$ is complex $(Q_8 \times G)$-simple if and only if each eigenvalue of $\Delta_g^V$ has multiplicity $2$ if and only if $\Delta_{g}|_{\Isot(V_{\R})}$ is real $G$-simple. By collecting up all isotypical components together, we conclude that the complex version of $\Delta_{g} $ has complex $(Q_8 \times G)$-simple spectrum if and only if the real version of $\Delta_{g}$ has real $G$-simple spectrum.
\end{proof}

\begin{remark}
    As an immediate consequence of the proof of Theorem \ref{th:Q8xG-simples-vs-G-simples-real}, if we suppose that $\widehat{G}_K$ has only representations of real type on the compact homogeneous space $M=G/K$, we can also conclude that the Laplace-Beltrami operator has a real $G$-simple for a generic $G$-invariant metric on $M$ if and only if its complex version has a complex $G$-simple spectrum for a generic $G$-invariant metric on $M$.
\end{remark}

Theorem \ref{th:Q8xG-simples-vs-G-simples-real} also show us that each example considered by \cite{Petrecca2019} and \cite{Schueth2017} in which we have a real $G$-simple spectrum for $\Delta_g$ satisfies that the corresponding complex version of $\Delta_g$ has a complex $(Q_8 \times G)$-simple spectrum, as in the following examples:

\begin{example} \label{ex:Lie-groups}
Let $G = (SU(2) \times \cdots \times SU(2) \times T^n)/\Gamma$, where $T^n$ is the $n$-torus and $\Gamma$ is any central discrete subgroup. Then, by Example \ref{ex:Schueth-SU2} and Theorem \ref{th:Q8xG-simples-vs-G-simples-real}, the complex operator $\Delta_g$ is $(Q_8 \times G)$-simple for a generic left invariant metric $g$.
\end{example}

\begin{example} \label{ex:symmetric-spaces}
Let $M = G/K$ a compact symmetric space. Example \ref{ex:Petrecca-Symmetric-Spaces} ensure us that if $\rank(M) =1$ or $M$ is a product of compact rank $1$ symmetric spaces, then each representation on $\widehat{G}_K$ is of real type. Thus the complex operator $\Delta_g$ has a complex $G$-simple spectrum for a generic $G$-invariant metric $g$, in such way that $Q_8$ does not play any role here. Also, we have that $\Delta_g$ has not a generic complex $(Q_8 \times G)$-simple spectrum for $\rank(M) \geq 2$.
\end{example}

Example \ref{ex:symmetric-spaces} only shows us that even the bigger group $Q_8 \times G$, containing $G$, still can not organize the eigenspaces of $\Delta_g$ as complex irreducible representations for rank $\geq 2$. There are some explanations for this effect related to some especial symmetries on the root systems. We explore them in the next section.

\,

\,

%%%%%%%%%%%%%%%%%%%%%%%%%%%%%%%%%%%%%%%%%%%%%%%%%%%%%%%%%%%%
%%%%%%%%%%%%%%%%%%%%%%%%%%%%%%%%%%%%%%%%%%%%%%%%%%%%%%%%%%%%
%%%%%%%%%%%%%%%%%%%%%%%%%%%%%%%%%%%%%%%%%%%%%%%%%%%%%%%%%%%%
%%%%%%%%%%%%%%%%%%%%%%%%%%%%%%%%%%%%%%%%%%%%%%%%%%%%%%%%%%%%
%%%%%%%%%%%%%%%%%%%%%%%%%%%%%%%%%%%%%%%%%%%%%%%%%%%%%%%%%%%%
%%%%%%%%%%%%%%%%%%%%%%%%%%%%%%%%%%%%%%%%%%%%%%%%%%%%%%%%%%%%
%%%%%%%%%%%%%%%%%%%%%%%%%%%%%%%%%%%%%%%%%%%%%%%%%%%%%%%%%%%%
%%%%%%%%%%%%%%%%%%%%%%%%%%%%%%%%%%%%%%%%%%%%%%%%%%%%%%%%%%%%

\section{Structural symmetries for symmetric spaces of any rank} \label{sec:rank2-section}

\,

When $M = G/K$ is a irreducible symmetric space with metric $g$, then for each irreducible $G$-representation the operator $\Delta_g^V: V \to V$ is the Casimir element on $V$.  Thus, each operator $\Delta_g^{V^K} = \Delta_g^V|_{V^K}:V^K \to V^K$ is a multiple of the identity. Therefore, in this panorama, $\Delta_g^{V^K}$ has simple spectrum if and only if $\dim_{\C} V^K = 1$.

We say that $K$ is a \emph{spherical subgroup} of $G$ if, for each $V\in \widehat{G}_K$, we have that $\dim_{\C} V^K = 1$. Thus, when $K$ is a spherical subgroup, each operator $\Delta_g^{V^K}$ has a single eigenvalue and $\Isot(V^*) \simeq V^K \otimes V^* \simeq  \C \otimes V^* \simeq V^*$. So, the verification of Theorem \ref{teo:criterio-metrica-G-simples-generica}, in this context, reduces to the verification of the item (1), that is, we only need to verify that distinct isotypical components do not produces common eigenvalues. However we warn that not every subgroup $K$ of $G$ is spherical.

Assume that $M = G/K$ is a symmetric space with metric $g$. In general, we have $\Isot(V^*) \simeq V^K \otimes V^*$. Since $\Delta_g^{V^K}$ has only one eigenvalue $\lambda = \lambda_V$, then 
\[
\Isot(V^*) = \Isot(V^*)_{\lambda} \simeq V^K \otimes V^* \simeq (V^*)^{\oplus \dim_{\C} V^K} \, .
\]
Now, we want to compare $\Isot(V_1^*)$ and $\Isot(V_2^*)$ for distinct elements $V_1,V_2 \in \widehat{G}_K$. Note that if $V_1$ and $V_2$ produces the same Casimir eigenvalue $\lambda$, then both of them contributes with the same $\lambda$-eigenspace of $\Delta_g$. We want to describe precisely which representations have $\lambda$ as their Casimir eigenvalue and we want to show that they are related by some algebraic symmetries. 

Let $V$ an irreducible $G$-module. We denote by $\lambda_V$ the Casimir eigenvalue of $\Delta_g^V$. Note that, if $\lambda$ is a eigenvalue of $\Delta_g$, then its corresponding $\lambda$-eigenspace $E_\lambda$ is given by
\begin{equation} \label{eq:lambda-eigenspace}
    L^2(G,K;\C)_\lambda \simeq \bigoplus\limits_{ \begin{array}{c}
         V \in \widehat{G}_K  \\
         \lambda_V = \lambda
    \end{array} } V^K \otimes V^*  \simeq \bigoplus\limits_{ \begin{array}{c}
         V \in \widehat{G}_K  \\
         \lambda_V = \lambda
    \end{array} } (V^*)^{\oplus \dim_{\C} V^K} \, .
\end{equation}

If $K$ is a spherical subgroup of $G$ (every compact symmetric pair $(G,K)$ satisfies this condition), we have
\begin{equation} \label{eq:lambda-eigenspace-spherical-subgroup}
    L^2(G,K;\C)_\lambda \simeq \bigoplus\limits_{ \begin{array}{c}
         V \in \widehat{G}_K  \\
         \lambda_V = \lambda
    \end{array} } V^*\, .
\end{equation}

\,

%%%%%%%%%%%%%%%%%%%%%%%%
%%%%%%%%%%%%%%%%%%%%%%%%
%%%%%%%%%%%%%%%%%%%%%%%%
%%%%%%%%%%%%%%%%%%%%%%%%

\subsection{Root systems and representations with the same Casimir eigenvalue} \label{subsec:Root-systems-and-representations-with-the-same-Casimir-eigenvalue}

 Recall the definitions and constructions on Subsection \ref{preliminaries:root-systems} for the root system $(\a, R_{\g,\k}, (\cdot \, , \, \cdot) \,)$ of the symmetric pair $(G,K)$, with metric $g$, Weyl chamber $C$, Weyl group $W$, lattice $\Gamma_G := \{ H \in \a \, ; \; \exp_G(2\pi H) = e \}$ and half sum of positive roots $\delta$. The inner product $(\cdot \, , \, \cdot)$ on $\a$ is induced by $g$.  We assume that $\delta \in \Gamma^*_G$ throughout this section (this occurs in the simply connected case, where $\Gamma_G = \I$, for example). We also assume that we are in the irreducible case, that is, we suppose that $\a$ is an irreducible root system.  We know that every $V \in \widehat{G}_K$ corresponds to an element $\mu \in C \cap \Gamma_G^*$ and we denote this representation by $V^\mu$. Thus, $\Delta_g^{V^\mu}= \lambda_\mu  \id$, where the scalar $\lambda_\mu$ is given by 
\begin{equation} \label{eq:casimir-eigenvalue}
    \begin{array}{l}
    \lambda_\mu := a_\mu^2 - (\delta,\delta) \; ,  \\
     \text{where $a_\mu := ( \mu + \delta,\mu+\delta)^{1/2} $ .}
\end{array}
\end{equation}
Equation \ref{eq:casimir-eigenvalue} and Remark \ref{remark:same-casimir-eigenvalue} lead us to the following proposition:
\begin{proposition}  \label{prop:autovalores-casimir-mesma-esfera-centrada-em--delta}
  Two representations $V^\mu, V^\eta \in \widehat{G}_K$ have the same Casimir eigenvalue $\lambda = \lambda_\mu = \lambda_\eta$ if and only if $\mu, \eta \in \S_{a}(-\delta) $, for some $a\geq 0$.  In this case, $a = a_\mu = a_\eta $ and $\lambda = a^2 - (\delta, \delta)$.
\end{proposition}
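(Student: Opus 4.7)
The plan is to unpack the formula for $\lambda_\mu$ given in Equation \ref{eq:casimir-eigenvalue} and observe that the claim is essentially a reformulation of Remark \ref{remark:same-casimir-eigenvalue}. Start from the identity $\lambda_\mu = a_\mu^2 - (\delta,\delta)$, where $a_\mu = (\mu+\delta,\mu+\delta)^{1/2} = \|\mu+\delta\|$ (the norm induced by the fixed inner product on $\a$). Since $(\delta,\delta)$ is a constant independent of $\mu$, the equation $\lambda_\mu = \lambda_\eta$ is equivalent to $a_\mu^2 = a_\eta^2$.

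Now I would note that both $a_\mu$ and $a_\eta$ are non-negative real numbers by construction, so squaring is injective on $[0,\infty)$ and the previous equality becomes $a_\mu = a_\eta$. Calling this common value $a$, I would rewrite $a = \|\mu+\delta\| = \|\mu - (-\delta)\|$ (and likewise for $\eta$), which is by definition the statement that $\mu$ and $\eta$ both lie on the sphere $\S_a(-\delta)$ of radius $a$ centered at $-\delta$. Reversing the chain of equivalences gives the converse direction, and the final formula $\lambda = a^2 - (\delta,\delta)$ is just the substitution back into Equation \ref{eq:casimir-eigenvalue}.

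There is no real obstacle here: the proposition is a direct translation of the algebraic formula for the Casimir eigenvalue into geometric language on the weight space $\a$, and the only subtlety worth mentioning is the passage $a_\mu^2 = a_\eta^2 \Leftrightarrow a_\mu = a_\eta$, which requires nothing more than non-negativity of the radii.
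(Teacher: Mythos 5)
Your proof is correct and coincides with the paper's own reasoning: the proposition is stated as an immediate consequence of Equation \ref{eq:casimir-eigenvalue} and Remark \ref{remark:same-casimir-eigenvalue}, which is precisely the chain of equivalences you wrote out, including the non-negativity of $a_\mu$, $a_\eta$ when passing from $a_\mu^2=a_\eta^2$ to $a_\mu=a_\eta$.
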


\begin{remark} \label{remark:representations-with-the-same-casimir-eigenvalue}
 By Proposition \ref{prop:autovalores-casimir-mesma-esfera-centrada-em--delta}, we note that all the representations in $\widehat{G}_K$ with Casimir eigenvalue $a^2-(\delta,\delta)$ are completely described by the set
\[
S(a):= \{ \mu \in  \Gamma_G^* \; | \; V^\mu \in \widehat{G}_K \; \text{e} \; a_\mu = a  \} = \S_a(-\delta) \cap  \Gamma_G^* \; .
\]
Even the elements $\mu \in S(a)-C$ describes some representation $V^{w_\mu \cdot \mu} \in \widehat{G}_K$, where $w_\mu$ is an element of the Weyl group $W$ which sends $\mu$ to the Weyl chamber $C$.
\end{remark}

Due to Proposition \ref{prop:autovalores-casimir-mesma-esfera-centrada-em--delta}, we want to move our root system to $-\delta$, that is, we want to put a copy of the root system on a new origin $-\delta$ in a way which is similar to the process of putting a tangent space on an arbitrary point of a smooth manifold.  We will do that in the next subsection.

\,

%%%%%%%%%%%%%%%%%%%%%%%%
%%%%%%%%%%%%%%%%%%%%%%%%
%%%%%%%%%%%%%%%%%%%%%%%%
%%%%%%%%%%%%%%%%%%%%%%%%

 \subsection{The linear structure on $-\delta$} \label{subsec:The-linear-structure-on--delta}

We define $\widetilde{\a}$ to be $\a$ as sets, but $\widetilde{\a}$ is endowed with the following vector operations 
 \[
\forall H,H' \in \a , \, \forall c \in \R, \;  \left\{ \begin{array}{l}
    (H-\delta) \oplus (H'-\delta) := (H-H')-\delta   \\
    c \odot (H-\delta) :=   (cH)-\delta \\
    \llangle H-\delta , H'-\delta \rrangle := \langle H, H' \rangle \quad .
\end{array} \right.
\]

\begin{remark} \label{remark:linear-isometry-delta-deslocada}
    Let $T_H$ the translation map $H' \mapsto H'+H$. One can easily check that $T_{-\delta}:\a \to \widetilde{\a}$ is a linear isometry with inverse $T_{\delta}:\widetilde{\a} \to \a$.
\end{remark}

Remark \ref{remark:linear-isometry-delta-deslocada} give us a way to move for $\a$ to $\widetilde{\a}$ (and vice-versa). There are other induced relations between these vector spaces. For example, every $\varphi \in \End(\a)$ induces a map
\[
\widetilde{\varphi}: \widetilde{\a}\ni (H-\delta) \mapsto (\varphi(H)-\delta) \in \tilde{\a} 
\]
If we set 
\[
\widetilde{\End(\a)}:= \{ \widetilde{\varphi} \, ; \; \varphi \in \End(\a) \} \; , 
\]
then $\widetilde{\End(\a)} = \End(\widetilde{\a})$ and this ring is isomorphic to the ring $\End(\a)$. Similarly, if we set 
\[
\widetilde{O(\a)}:= \{ \widetilde{\varphi} \, ; \; \varphi \in O(\a) \} \; , 
\]
then $\widetilde{O(\a)} = O(\widetilde{\a})$.

Recall Remark \ref{remark:representations-with-the-same-casimir-eigenvalue} and define
\[
O(\widetilde{\a})_{S(a)}:= \{ \widetilde{\varphi} \in O(\widetilde{\a}) ; \; \widetilde{\varphi}(S(a)) \subset S(a) \} \; .
\]
Since $S(a)$ is a finite set and each map in $O(\widetilde{\a})$ is one-to-one, then we have the equality

\begin{equation*}
    O(\widetilde{\a})_{S(a)} = \{ \widetilde{\varphi} \in O(\widetilde{\a}) ; \; \widetilde{\varphi}(S(a)) = S(a) \} \; 
\end{equation*}
and $O(\widetilde{\a})_{S(a)}$ is a subgroup of $O(\widetilde{\a})$.

We want to show that $O(\widetilde{\a})_{S(a)}$ has finite order and that it acts transitively on $S(a)$. We do that in the following subsections.

\,

%%%%%%%%%%%%%%%%%%%%%%%%
%%%%%%%%%%%%%%%%%%%%%%%%
%%%%%%%%%%%%%%%%%%%%%%%%
%%%%%%%%%%%%%%%%%%%%%%%%

 \subsection{Representations with the same Casimir eigenvalue} \label{subsec:representations-with-the-same-Casimir-eigenvalue}

 \begin{theorem} \label{th:transitive-action}
 Let $a>0$ such that $\lambda := a^2 - (\delta, \delta)$ is a Casimir eigenvalue. The group  $O(\widetilde{\a})_{S(a)}$ is a finite subgroup of $O(\widetilde{\a})$ and it acts transitively on $S(a)$. 
 \end{theorem}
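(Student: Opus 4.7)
My plan is to handle the two assertions separately.

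For finiteness, I would start by observing that $S(a) = \Gamma_G^* \cap \S_a(-\delta)$ is finite (a discrete lattice meets a compact sphere). I would then show that $S(a)$ linearly spans $\widetilde{\a}$, which implies that every $\widetilde{\varphi} \in O(\widetilde{\a})_{S(a)}$ is determined by its induced permutation of $S(a)$, giving an injection $O(\widetilde{\a})_{S(a)} \hookrightarrow \mathrm{Sym}(S(a))$ and hence finiteness. For the spanning, I would use that the dotted action $w \cdot \mu := w(\mu+\delta) - \delta$ of the Weyl group $W$ fixes $-\delta$ and therefore identifies with the ordinary orthogonal action on $\widetilde{\a}$; this action preserves $\Gamma_G^*$ (since $\delta \in \Gamma_G^*$) and the sphere $\S_a(-\delta)$, so it preserves $S(a)$, giving an inclusion $W \subset O(\widetilde{\a})_{S(a)}$. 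In the irreducible case the $W$-action on $\widetilde{\a}$ is irreducible, so any $W$-orbit of a nonzero vector spans $\widetilde{\a}$; the orbit of any $\mu \in S(a)$ (nonzero in $\widetilde{\a}$ because $a>0$) lies in $S(a)$, yielding the spanning.

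For transitivity, fix $\mu_1, \mu_2 \in S(a)$. Both have $\widetilde{\a}$-norm $a$, so the orthogonal reflection $s \in O(\widetilde{\a})$ in the hyperplane through the origin of $\widetilde{\a}$ perpendicular to $\mu_1 - \mu_2$ satisfies $s(\mu_1) = \mu_2$ by the standard computation for two vectors of equal length. The transitivity question then reduces to whether $s$ stabilizes $S(a)$: preservation of the sphere $\S_a(-\delta)$ is automatic since $s \in O(\widetilde{\a})$, so the only content is preservation of the lattice constraint $\nu \in \Gamma_G^*$. Writing $s(\nu) = \nu - c_\nu(\mu_1 - \mu_2)$ with $c_\nu$ the usual reflection coefficient and noting $\mu_1 - \mu_2 \in \Gamma_G^*$, the preservation of $S(a)$ boils down to the integrality $c_\nu \in \Z$ for every $\nu \in S(a)$.

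I expect this integrality step to be the main obstacle. Both the numerator and the denominator of $c_\nu$ can be rewritten, using $|\mu_i+\delta|^2 = a^2$ and bilinearity, in terms of inner products among the shell points $\mu_1+\delta,\mu_2+\delta,\nu+\delta$ on $\S_a(0) \subset \a$. I would push this rewriting to extract integrality from the arithmetic of $\Gamma_G^*$ and the rigidity of the shell $\S_a(-\delta) \cap \Gamma_G^*$. If the bare reflection $s$ fails to stabilize $S(a)$ in some configuration, the fallback is to compose $s$ with suitable elements of $W$ (already known to lie in $O(\widetilde{\a})_{S(a)}$) so as to produce a composite element of $O(\widetilde{\a})_{S(a)}$ still sending $\mu_1$ to $\mu_2$. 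Since $W$ acts simply transitively on Weyl chambers, this maneuver reduces the whole transitivity problem to the case where $\mu_1,\mu_2 \in C$, where only the extra symmetries of the lattice shell (beyond the Weyl group) need be invoked, and these extra symmetries are exactly the structural symmetries of the root system at $-\delta$ that the section aims to exhibit.
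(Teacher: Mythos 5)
Your finiteness argument is correct and is essentially the paper's own: using $\delta\in\Gamma_G^*$ one checks that the shifted Weyl group $\widetilde{W}$ stabilizes $S(a)$ (this is the paper's Lemma \ref{lemma:weyl-group}), irreducibility of the $W$-action together with $a>0$ (so that $\mu+\delta\neq 0$) makes the $\widetilde{W}$-orbit of any $\mu\in S(a)$ span $\widetilde{\a}$, and since an element of $O(\widetilde{\a})_{S(a)}$ is determined by its values on a spanning subset of the finite set $S(a)=\S_a(-\delta)\cap\Gamma_G^*$, the group injects into the permutations of $S(a)$ and is therefore finite. The paper does exactly this, extracting a basis from a single Weyl orbit inside $S(a)$.

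The transitivity half, however, is not proved, and you flag the obstruction yourself. The bisecting reflection $s$ does send $\mu_1$ to $\mu_2$, but for $s$ to stabilize $S(a)$ you need, for every $\nu\in S(a)$, that $c_\nu(\mu_1-\mu_2)\in\Gamma_G^*$, where $c_\nu = 2(\nu+\delta,\mu_1-\mu_2)/(\mu_1-\mu_2,\mu_1-\mu_2)$; since $\mu_1-\mu_2$ is just some lattice vector (in general not proportional to a root, nor primitive in any helpful sense), there is no reason for this integrality, and the proposed rewriting in terms of inner products of shell points does not supply it. The fallback of composing with elements of $\widetilde{W}$ cannot close the gap in the only case that matters: when $\mu_1$ and $\mu_2$ lie in different $\widetilde{W}$-orbits on the shell --- precisely the rank $\geq 2$ phenomenon of two non-equivalent spherical representations sharing a Casimir eigenvalue --- no Weyl element relates them, so the ``extra symmetries of the lattice shell'' you invoke at the end are exactly the maps whose existence the theorem asserts, and appealing to them is circular. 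Nor can their existence be granted on soft grounds: the orthogonal stabilizer of a finite lattice shell need not act transitively on it in general, so something specific to this configuration must be produced. Note also that the paper's own treatment of transitivity takes a different route (it asserts that an element of $O(\widetilde{\a})_{S(a)}$ can be prescribed on the chosen basis so as to send $\mu$ to $\eta$, with no reflection involved), so your proposal diverges from it precisely at this step and, as written, leaves the transitivity claim unestablished.
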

For the proof of the theorem above, see Subsection \ref{subsec:proof-theorem-representations-with-the-same-Casimir-eigenvalue}.

From the precedent theorem, we note that irreducible representations in $\widehat{G}_K$ with the same Casimir eigenvalue $\lambda = a^2-(\delta,\delta)$ are related by a transitive action of some finite algebraic symmetries in $O(\widetilde{\a})_{S(a)} \subset O(\widetilde{\a})$. By Section \ref{sec:Q8-section}, we must consider $G$-representations in the real version of the operator, while we must consider $(Q_8 \times G)$-representations if we consider its corresponding complex version.

Before we conclude our final statements and analysis, recall the configurations for the $\lambda$-eigenspaces in \ref{eq:lambda-eigenspace} and \ref{eq:lambda-eigenspace-spherical-subgroup} (the last one for a spherical subgroup).

\begin{corollary} Suppose that $M = G/K$ is a irreducible compact symmetric space with root system $\a$ satisfying $\delta \in \Gamma_G^*$ (for example, in the simply connected case, where $\Gamma_G = \I$). Then each real eigenspace of $\Delta_g$, acting on $M$, is a real irreducible representation of $G$ or it is a finite direct sum of real irreducible representations of $G$ which are related by a transitive action of a finite subgroup in $O(\widetilde{\a})$.

Also, if $\widehat{G}_K$ has only representations of real type, then each complex eigenspace of $\Delta_g$ is a complex irreducible representation of $G$ or it is a finite direct sum of real irreducible representations of $G$ which are related by a transitive action of a finite subgroup in $O(\widetilde{\a})$.

Similarly, if $\widehat{G}_K$ has any representation of complex or quaternionic type, then each complex eigenspace of $\Delta_g$ is a complex irreducible representation of $Q_8 \times G$ or it is a finite direct sum of  complex irreducible representations of $Q_8 \times G$ which are related by a transitive action of a finite subgroup in $O(\widetilde{\a})$.
\end{corollary}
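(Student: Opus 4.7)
The plan is to combine the eigenspace decomposition (\ref{eq:lambda-eigenspace-spherical-subgroup}), which is valid because $K$ is spherical in every compact symmetric pair, with Proposition \ref{prop:autovalores-casimir-mesma-esfera-centrada-em--delta} (which parametrises the irreducibles of a fixed Casimir eigenvalue by the set $S(a)$), Theorem \ref{th:transitive-action} (which supplies a transitive finite action of a subgroup of $O(\widetilde{\a})$ on $S(a)$), the $Q_8$-repackaging from Section \ref{sec:Q8-section}, and the correspondence $V \leftrightarrow V_{\R}$ recalled in Subsection \ref{sec:types-representation}. The three statements then correspond to the three layers of this correspondence.

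First, I would fix an eigenvalue $\lambda$ of $\Delta_g$ on $L^2(G,K;\C)$. Sphericity of $K$ together with the identity $\Delta_g^{V^K} = \lambda_V \, \id$ gives
\[
L^2(G,K;\C)_\lambda \simeq \bigoplus_{\mu \in S(a)} (V^\mu)^*, \qquad a := \sqrt{\lambda + (\delta,\delta)},
\]
where the index set $S(a) = \S_a(-\delta) \cap \Gamma_G^*$ comes from Proposition \ref{prop:autovalores-casimir-mesma-esfera-centrada-em--delta}. Theorem \ref{th:transitive-action} now says $O(\widetilde{\a})_{S(a)}$ is a finite subgroup of $O(\widetilde{\a})$ acting transitively on $S(a)$; this is the "transitive action of a finite subgroup in $O(\widetilde{\a})$" that relates the summands. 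If $|S(a)| = 1$ the eigenspace is a single irreducible $G$-module, otherwise it is a finite direct sum in a single $O(\widetilde{\a})_{S(a)}$-orbit.

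For the complex case under the only-real-type hypothesis, nothing else is needed: each $(V^\mu)^*$ is already a complex irreducible $G$-module and the display above is the claimed decomposition. For the complex case when some representation in $\widehat{G}_K$ is of complex or quaternionic type, I would invoke the $Q_8$-action of Section \ref{sec:Q8-section}, which commutes with $\Delta_g$; by the discussion there each pair $V \oplus \overline{V} \simeq \H \otimes V$ in the display consolidates into one $(Q_8 \times G)$-irreducible. The resulting grouping of summands is compatible with the $O(\widetilde{\a})_{S(a)}$-orbit on $S(a)$ because the $Q_8$-action commutes with the $G$-labelling by highest weight, so the eigenspace becomes a direct sum of $(Q_8 \times G)$-irreducibles related by that same transitive action. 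Finally, for the real statement I would pass from complex to real by replacing each summand $V^\mu$ (or each pair $\H\otimes V^\mu$) by the unique real irreducible $V_\R^\mu$ appearing in $\Isot(V_\R^\mu) \cap C^\infty(G,K;\R)$ via Subsection \ref{sec:types-representation}; the same $O(\widetilde{\a})_{S(a)}$-orbit on $S(a)$ parametrises the real summands.

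The main conceptual subtlety is the exact meaning of "related by a transitive action": elements of $O(\widetilde{\a})_{S(a)}$ act on weight vectors in $\widetilde{\a}$, not on the Hilbert space, so they do not literally permute the submodules. However, because spherical irreducible $G$-modules in $\widehat{G}_K$ are classified up to isomorphism by their highest restricted weights in $C \cap \Gamma_G^*$, a transitive $O(\widetilde{\a})_{S(a)}$-action on $S(a)$ automatically furnishes abstract intertwiners between the summands, after conjugating back into the Weyl chamber if necessary via Remark \ref{remark:representations-with-the-same-casimir-eigenvalue}; this is exactly what the corollary asserts. A secondary point is checking that the $Q_8$-repackaging respects the orbit, but this is immediate since the $Q_8$-action and the $O(\widetilde{\a})$-action operate on disjoint aspects of the decomposition.
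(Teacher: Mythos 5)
Your proposal is correct and follows essentially the same route as the paper, which states the corollary without a separate proof precisely as the assembly of the spherical-subgroup eigenspace decomposition \ref{eq:lambda-eigenspace-spherical-subgroup}, Proposition \ref{prop:autovalores-casimir-mesma-esfera-centrada-em--delta}, Theorem \ref{th:transitive-action}, and the real/complex $(Q_8\times G)$ translation of Section \ref{sec:Q8-section}. Your added remark on what ``related by a transitive action'' means (an action on the weights in $S(a)$, inducing intertwiners after Weyl conjugation, rather than an action on the Hilbert space) is a fair clarification of the same argument, not a departure from it.
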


Note that the panorama on the precedent corollary is satisfied for all the metrics $g$ such that $(M,g)$ is a symmetric space. In particular, this is a generic setting. 

An interesting question remains: when $K$ is not a spherical subgroup of $G$ (in particular, $(G,K)$ is not a symmetric pair), can we still obtain an analogous statement?

\,

%%%%%%%%%%%%%%%%%%%%%%%%%%%%%%%%%%%
%%%%%%%%%%%%%%%%%%%%%%%%%%%%%%%%%%%
%%%%%%%%%%%%%%%%%%%%%%%%%%%%%%%%%%%
%%%%%%%%%%%%%%%%%%%%%%%%%%%%%%%%%%%

\subsection{Proof of the Theorem \ref{th:transitive-action}} \label{subsec:proof-theorem-representations-with-the-same-Casimir-eigenvalue}
Fix $a>0$ such that $\lambda := a^2 - (\delta, \delta)$ is a Casimir eigenvalue. We recall that the Weyl group of $\a$ is denoted by $W$ and, in our context, we are assuming $\delta \in \Gamma_G^*$ (for example, when $\Gamma_G = \I$ as in the simply connected case) and $\a$ irreducible. We begin with a lemma.

\begin{lemma} \label{lemma:weyl-group}
   Let $\widetilde{W}:= \{ \widetilde{w} \in O(\widetilde{\a}) \, ; \;  w \in W \}$. Then $\widetilde{W} \subset O(\widetilde{\a})_{S(a)}$.
\end{lemma}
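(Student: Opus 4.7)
The plan is to verify the two defining requirements of $O(\widetilde{\a})_{S(a)}$ directly, namely that for every $w\in W$ the induced map $\widetilde{w}$ is orthogonal on $(\widetilde{\a}, \llangle \cdot, \cdot \rrangle)$ and that it stabilises the finite set $S(a)$.

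The orthogonality of $\widetilde{w}$ is essentially by construction. First I would unpack the definition: for $H,H' \in \a$,
\[
\widetilde{w}(H-\delta) = w(H)-\delta,\qquad \llangle H-\delta, H'-\delta \rrangle = (H,H'),
\]
so $\llangle \widetilde{w}(H-\delta),\widetilde{w}(H'-\delta)\rrangle = (w(H),w(H')) = (H,H')$, using the orthogonality of $w$ in $\a$. A similar one-line check confirms $\oplus$-linearity and $\odot$-homogeneity of $\widetilde{w}$, so $\widetilde{w}\in O(\widetilde{\a})$.

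For stability of $S(a)$, I would exploit the description $S(a)=\S_a(-\delta)\cap \Gamma_G^*$ from Remark \ref{remark:representations-with-the-same-casimir-eigenvalue}. Take $\mu \in S(a)$ and write $\mu=(\mu+\delta)-\delta$, so that
\[
\widetilde{w}(\mu) = w(\mu+\delta)-\delta.
\]
I would then verify the two conditions separately: (i) the distance condition
\[
\|\widetilde{w}(\mu)-(-\delta)\|=\|w(\mu+\delta)\|=\|\mu+\delta\|=a
\]
is immediate from $w\in O(\a)$, placing $\widetilde{w}(\mu)$ on $\S_a(-\delta)$; and (ii) the lattice condition $\widetilde{w}(\mu)\in \Gamma_G^*$ follows from the standing hypothesis $\delta\in \Gamma_G^*$ (so $\mu+\delta\in\Gamma_G^*$), together with $W$-invariance of $\Gamma_G^*$ (so $w(\mu+\delta)\in\Gamma_G^*$), and finally subtracting $\delta\in\Gamma_G^*$. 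Combining (i) and (ii) gives $\widetilde{w}(\mu)\in \S_a(-\delta)\cap \Gamma_G^*=S(a)$.

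The only nontrivial input I expect to need is the $W$-invariance of $\Gamma_G^*$; I would cite it as the standard fact that the Weyl group, being generated by reflections in root hyperplanes, stabilises the weight lattice (the dual of the coroot lattice $\I$), and in the simply connected case $\Gamma_G^* = \I^*$ is exactly this weight lattice. The shift by $\delta$ that distinguishes $\widetilde{w}$ from $w$ is the only subtlety, and the assumption $\delta \in \Gamma_G^*$ is what makes that shift harmless; after that, the argument is a two-line calculation and the inclusion $\widetilde{W}\subset O(\widetilde{\a})_{S(a)}$ is established.
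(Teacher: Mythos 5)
Your proposal is correct and follows essentially the same route as the paper: write $\widetilde{w}(\mu)=w(\mu+\delta)-\delta$, use that $\widetilde{w}$ is an isometry of $\widetilde{\a}$ to keep $\widetilde{w}(\mu)$ on $\S_a(-\delta)$, and use $W$-invariance of $\Gamma_G^*$ together with $\delta\in\Gamma_G^*$ to keep it in the lattice. The only difference is presentational: you verify explicitly that $\widetilde{w}\in O(\widetilde{\a})$, which the paper takes as given from the construction in Subsection \ref{subsec:The-linear-structure-on--delta}.
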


\begin{proof}
    Let $\mu \in S(a) = \S_a(-\delta) \cap \Gamma_G^*$. Then for each $w \in W$, we have
    \[
    \begin{array}{rcl}
      \widetilde{w}(\mu)&=&\widetilde{w}(\mu+\delta-\delta)  \\
                        &=&w(\mu+\delta)-\delta  \\
                        &=&w(\mu)+w(\delta)-\delta 
    \end{array}
    \]
    Since $W(\Gamma_G^*) \subset \Gamma_G^*$ and $\Gamma_G^*$ is a lattice, then $w(\mu)+w(\delta)-\delta \in \Gamma_G^*$. Also, since $\widetilde{w} \in O(\widetilde{\a})$, we have $\widetilde{w}(\mu) \in \S_a(-\delta)$. Thus, we conclude that $\widetilde{w}(S(a)) \subset S(a)$, that is,  $\tilde{w} \in O(\widetilde{\a})_{S(a)}$. 
\end{proof}

\begin{remark} \label{remark:W-irreducible-action}
    It is well known that the $W$-action on $\a$ is irreducible. Thus, we conclude that the $\widetilde{W}$-action on $\widetilde{\a}$ is also irreducible. 
\end{remark}

\begin{proof}[Proof of Theorem \ref{th:transitive-action}]
Choose $\mu \in S(a) = \S_a(-\delta) \cap \Gamma_G^*$. Since $a>0$, we have $\mu + \delta \neq 0$. By Remark \ref{remark:W-irreducible-action}, the set $\widetilde{W}\cdot \mu$ spans the vector space $\widetilde{\a}$. Thus, we can choose elements $ w_1, \dots, w_r \in W$, $w_1 = \id$, such that $\beta:= \{ \widetilde{w}_1 \cdot \mu , \dots, \widetilde{w}_r \cdot \mu \}$ is a basis of $\widetilde{\a}$. By Lemma \ref{lemma:weyl-group}, $\beta \subset S(a)$.

Let $\widetilde{\varphi} \in O(\widetilde{\a})_{S(a)}$. Since $\widetilde{\varphi}$ is completely characterized by its values evaluated on the basis $\beta$ and, beyond that, $S(a)$ is a finite set, then we have only finite possibilities to define $\widetilde{\varphi}$. Thus, $O(\widetilde{\a})_{S(a)}$ must be a finite set.

Now, let $\eta \in S(a)$. By the same argument in the last paragraph, we can construct $\widetilde{\varphi}\in O(\widetilde{\a})_{S(a)}$ such that $\widetilde{\varphi}(\widetilde{w_1} \cdot \mu) 
 = \widetilde{\varphi}(\mu) = \eta$, which proves that the action is transitive.

\end{proof} 

%%%%%%%%%%%%%%%%%%%%%%%%%%%%%%%%%%%%%%%%%%%%%%%%%%%%%%%%%%%%
%%%%%%%%%%%%%%%%%%%%%%%%%%%%%%%%%%%%%%%%%%%%%%%%%%%%%%%%%%%%
%%%%%%%%%%%%%%%%%%%%%%%%%%%%%%%%%%%%%%%%%%%%%%%%%%%%%%%%%%%%
%%%%%%%%%%%%%%%%%%%%%%%%%%%%%%%%%%%%%%%%%%%%%%%%%%%%%%%%%%%%
%%%%%%%%%%%%%%%%%%%%%%%%%%%%%%%%%%%%%%%%%%%%%%%%%%%%%%%%%%%%
%%%%%%%%%%%%%%%%%%%%%%%%%%%%%%%%%%%%%%%%%%%%%%%%%%%%%%%%%%%%
%%%%%%%%%%%%%%%%%%%%%%%%%%%%%%%%%%%%%%%%%%%%%%%%%%%%%%%%%%%%
%%%%%%%%%%%%%%%%%%%%%%%%%%%%%%%%%%%%%%%%%%%%%%%%%%%%%%%%%%%%

\subsection*{Acknowledgements}

This research was partially supported by Coordenação de Aperfeiçoamento de Pessoal de Nível Superior (CAPES) and partially supported by Fundação de Amparo à Pesquisa do Estado do Amazonas (FAPEAM).

%%%%%%%%%%%%%%%%%%%%%%%%%%%%%%%%%%%%%%%%%%%%%%%%%%%%%%%%%%%%
%%%%%%%%%%%%%%%%%%%%%%%%%%%%%%%%%%%%%%%%%%%%%%%%%%%%%%%%%%%%

%%%%%%%%%%%%  REFERENCIAS %%%%%%%%%%%%%%%%%%%%%%%%%%%%

%\addcontentsline{toc}{section}{References}
\small
\bibliographystyle{apalike}
%\bibliography{references}

 %%% Só serve para submeter o source no formato .bbl que é o que o ArXiv aceita. Comentar se não for pro arxiv.

\end{document}